\documentclass[12pt,leqno,twoside]{amsart}

\usepackage{mathrsfs, amsmath, amsthm, amsfonts, amssymb, latexsym, amscd, graphicx,tikz-cd}

\usepackage{fullpage}

\newcommand{\R}{\mathbb{R}}

\newtheorem{thm}{Theorem}[section]
\newtheorem{cor}{Corollary}[section]

\newtheorem*{defn}{Definition}

\begin{document}
\title{Combinatorial Hodge Index Theorem for Polytopes}
\author{Jacob B. Wood} 
\address{Department of Mathematics, University of Wisconsin-Madison, 480 Lincoln Drive, Madison WI 53706-1388, USA}
\email{jbwood2@wisc.edu}

\begin{abstract}

Toric varieties can be constructed from rational polytopes, and several invariants of toric varieties can be expressed in terms of the combinatorics of the corresponding polytope. Barthel--Brasselet--Fieseler--Kaup (BBFK) introduced combinatorial intersection cohomology for convex polytopes, which agrees with the intersection cohomology of the associated toric variety when the polytope is rational. Maxim--Sch\"urmann computed the intersection cohomology signature of a projective toric variety, corresponding to the case of a polytope with rational vertices. Using the combinatorial framework of BBFK, we show that the Maxim--Sch\"urmann formula extends to arbitrary convex polytopes. Finally, we discuss a version of the Hodge index theorem for polytopes.
    
\end{abstract}

\date{\today}

\subjclass[2020]{52B05, 52B20, 14C17, 14C30, 14M25}

\keywords{convex polytope, fan, toric variety, combinatorial intersection cohomology, K\"ahler package, Hard Lefschetz, Hodge-Riemann relations, signature, $h$-polynomial, $g$-polynomial}

\maketitle

\section{Introduction}

        We compute the signature of the combinatorial intersection cohomology pairing of a polytope. This generalizes a formula of Maxim-Sch\"urmann \cite[Example 5.8, Example 6.7]{MS} for the intersection cohomology signature of a projective toric variety to non-rational convex polytopes. When restricted to rational polytopes, this yields a combinatorial proof of the signature formula, without resorting to mixed Hodge modules, characteristic classes or the geometry of the associated toric variety.

        The study of the combinatorics of polytopes saw significant breakthroughs with the introduction of Stanley's toric $h$-polynomials. For a rational polytope, the coefficients of its toric $h$-polynomial are the intersection cohomology Betti numbers of the corresponding projective toric variety \cite{S}. Thus, the combinatorially defined $h$-polynomial has in this case a symmetric unimodal sequence of non-negative coefficients by the hard Lefschetz theorem for intersection cohomology of complex projective algebraic varieties \cite{BBD}. However, there exist polytopes for which no combinatorially equivalent polytope has all rational coordinates \cite{Z}, so such a polytope does not have an associated toric variety. Without a variety, one might ask whether there is another way to associate a sequence of vector spaces to any polytope. This was answered by Barthel-Brasselet-Fieseler-Kaup \cite{BBFK} by introducing a special sheaf on the poset of cones of the normal fan to the polytope. Their ``minimal extension sheaf'' yields a graded module that behaves much like the intersection cohomology of a toric variety; this graded module is called the combinatorial intersection cohomology module. Its graded pieces are $\mathbb{R}$-vector spaces, and the work of Barthel-Brasselet-Fieseler-Kaup \cite{BBFK}, Bressler-Lunts \cite{BL}, and Karu \cite{K} show that they satisfy combinatorial versions of the hard Lefschetz theorem and Hodge-Riemann relations.

        The hard Lefschetz theorem for simplicial polytopes was proven combinatorially by McMullen and subsequently extended to all polytopes by Karu, whose proof relied on choices to define an intersection pairing. This pairing was subsequently justified by Bressler-Lunts \cite{BL2} (see also \cite{BBFK3}), who were able to define a canonical pairing and show that it coincides with that of Karu. We use the primitive decomposition induced by the Lefschetz operator to combinatorially reproduce the signature formula for general polytopes. We then define an intersection cohomology  $\chi_y$-polynomial for a fan to interpret the signature formula as a version of the Hodge index theorem.

{\bf Acknowledgments.} The author would like to thank Lauren\c{t}iu Maxim for suggesting the problem. This paper was produced completely without the aid of any Large Language Model (LLM), including in the research process, preparation, and editing.

\section{Preliminaries}

Let $V$ be a finite-dimensional vector space over $\R.$

\begin{defn} A \textbf{cone} $\sigma$ is a subset of $V$ defined by non-negativity of a finite set of linear functionals on $V$.
\end{defn}

\begin{defn} A \textbf{face} $\tau\subset\sigma$ is a cone of $V$ defined by all the same inequalities as $\sigma$ along with another inequality $-L\geq0,$ where $L$ is a functional non-negative on $\sigma$.
\end{defn}

\begin{defn} A \textbf{fan} $\Phi$ is a finite set of cones in $V$ such that the intersection of any two cones is a face of each, and every face of a cone of $\Phi$ also belongs to $\Phi$. A codimension 1 face is called a \textbf{facet} and a one dimensional face is a called a \textbf{ray}.
\end{defn}

\begin{defn} {The \textbf{support} of a fan $\Phi$ is the union of its cones denoted $|\Phi|\subset V$. A fan whose support is all of $V$ is called \textbf{complete}}.
\end{defn}

Note that as a set, $\Phi$ is a finite poset with relations given by inclusion of subfans. This poset structure endows $\Phi$ with a topology generated by the principal order ideals of the poset. In other words, the open sets are the subfans of $\Phi$.

Stanley's $h$- and $g$-polynomials for an Eulerian poset, e.g., the face lattice of a polytope, are defined inductively as follows:
\[g_\emptyset(t)=h_\emptyset(t)\equiv1\]
\[g^j_Q=\begin{cases}
                h_Q^j-h_Q^{j-1} \text{ for } 0\leq j\leq \lfloor d/2\rfloor  \\
                0,  \text{ otherwise}
                 \end{cases}\]
\[h_P(t)=\sum_{Q<P}(t-1)^{d-(rk(Q)+1)}g_Q(t).\]

\begin{defn} A fan is \textbf{simplicial} if the dimension of each cone is equal to the number of its rays.
\end{defn}

Consider the sheaf of $\R$-algebras \[\mathcal{A}_\Phi\in Sh(\Phi)\] characterized by its stalks $\mathcal{A}_{\Phi,\sigma}=A_\sigma$, the algebra of polynomial functions on $\sigma$. Then the global sections $\mathcal{A}_\Phi(\Phi)=\Gamma(\Phi,\mathcal{A}_\Phi)$ can be interpreted as the set of piecewise polynomial functions on $\Phi$ whose restrictions to any cone of $\Phi$ are polynomial functions. Another important sheaf on $\Phi$ is the constant sheaf $A_\Phi$ whose global sections are the ring of global polynomials \[A:=\text{Sym}\ V^*=\Gamma(A_\Phi).\] Denote by $A^+$ the maximal ideal of polynomials of non-zero degree. For an $A$-module, $M$, we define \[\overline{M}=M/A^+M.\]

The fan analog of intersection cohomology is defined in terms of a special sheaf of $\mathcal{A}_\Phi$-modules. Bressler-Lunts \cite{BL} showed that there exists a (unique up to isomorphism) sheaf of $\mathcal{A}_\Phi$-modules called the \textbf{minimal extension sheaf}, denoted $\mathcal{L}_\Phi$, satisfying the following:
$\mathcal{L}_{\Phi,o}=\R,$ and the map $\overline{\mathcal{L}_{\Phi}(\sigma)}\rightarrow\overline{\mathcal{L}_\Phi(\partial\sigma)}$ is an isomorphism. For a toric variety $X_\Phi$, its equivariant (resp.  non-equivariant) intersection cohomology can be expressed as $IH_T(X_\Phi)\cong \Gamma(\mathcal{L}_{\Phi}) \ (\text{resp. } IH(X_\Phi)\cong\overline{\Gamma(\mathcal{L}_\Phi)})$. In light of these results, we can define the (combinatorial) intersection cohomology of a fan to be $$IH(\Phi):=\overline{\Gamma(\mathcal{L}_\Phi)}.$$

It was shown in \cite[Theorem 6.3]{BBFK} that combinatorial intersection cohomology admits a Poincare pairing. In general, the pairing is not explicit, but for simplicial fans (in which case $\mathcal{L}_\Phi=\mathcal{A}_\Phi$), Brion \cite{Bri} gave the following description.

Let $f,g\in \Gamma(\mathcal{L}_\Phi)=\Gamma(\mathcal{A}_\Phi)$ be continuous real-valued functions on $V$ which are homogeneous polynomials on each cone. Denote by $f_\sigma$ the global polynomial which agrees with $f$ on $\sigma$, and denote by $\phi_\sigma$ a product of a choice of defining functions on each ray of $\sigma$ such that the resulting product is positive on the interior of $\sigma$. (Note that this choice of defining function for each ray may not be consistent from cone to cone.) Then the pairing is given by $$(f,g)=\sum_{\dim\sigma=n}\frac{f_\sigma g_\sigma}{\phi_\sigma}.$$
\cite{BBFK} furthermore shows that even in the case of fans coming from nonrational polytopes, the pairing is a non-degenerate bilinear form on combinatorial intersection cohomology. In Theorem \ref{th33}, we give a combinatorial formula for the signature of this pairing. The signature of a non-degenerate $\R-$bilinear form on a vector space $W$ is the difference in dimension between the positive definite and negative definite subspaces $W^+,W^-\subseteq W.$

\section{The Combinatorial K\"ahler Package and signature of fans}

For toric varieties, there is a Lefschetz operator on intersection cohomology, and we would like to find such an operator for combinatorial intersection cohomology. It is unknown whether there is such an operator for general fans. However, when $\Phi$ admits a strictly convex piecewise linear function (e.g., the inner normal fan of a convex polytope $P$), then that function as an element of $\mathcal{A}_{\Phi}(\Phi)$ is a Lefschetz operator for $IH(\Phi)$. Fans which have a strictly convex piecewise linear function are called \textbf{projective} because when the fan has an associated toric variety, this condition is equivalent to the toric variety being projective (see \cite[6.9]{D}). When $\Phi$ is projective, Karu \cite[Theorem 0.1]{K} showed:

\begin{thm} (Hard Lefschetz) \label{th31} Let $\Phi$ be a projective fan with a strictly convex piecewise linear function $\ell$. Then the map \[\ell^k:IH^{n-k}(\Phi)\rightarrow IH^{n+k}(\Phi)\] is an isomorphism for all $k\geq 1.$
\end{thm}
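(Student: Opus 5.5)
Since Theorem \ref{th31} is Karu's hard Lefschetz theorem, the plan is to follow his inductive strategy: prove hard Lefschetz and the Hodge--Riemann bilinear relations together, by induction on $n=\dim V$. The induction works with the whole \emph{Kähler package} for all complete projective fans of smaller dimension. For a cone $\sigma\in\Phi$, the star of $\sigma$ projected to $V/\mathrm{span}(\sigma)$ is a complete projective fan, with the induced strictly convex function. Its intersection cohomology is isomorphic to a shifted copy of $\overline{\mathcal{L}_{\Phi,\sigma}}$-type data. By the inductive hypothesis, it therefore satisfies hard Lefschetz and the Hodge--Riemann relations with respect to the restricted pairing.

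The first step is to reduce to a statement about a single degree. Because $\Phi$ is complete, $IH(\Phi)$ carries the nondegenerate Poincaré pairing of \cite[Theorem 6.3]{BBFK}, and $\dim IH^{n-k}=\dim IH^{n+k}$. It therefore suffices to show that $\ell^k$ is injective on $IH^{n-k}(\Phi)$.

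The second step is to pass to the boundary. Following Karu, I would realize $\ell$ as a sum $\ell=\sum_\rho c_\rho\chi_\rho$ of piecewise linear functions supported on stars of rays; this is possible after subdividing or choosing $\ell$ generic. Multiplication by $\ell$ then factors through the restriction
\[IH(\Phi)\rightarrow\bigoplus_\rho IH(\mathrm{Star}\,\rho)[-2],\]
and the restricted pairings are compatible: $(\ell x,y)=\sum_\rho c_\rho(x|_\rho,y|_\rho)_\rho$. For $x$ primitive with $\ell^{k}x=0$, the inductive Hodge--Riemann relations on each link say that $x|_\rho$ lies in the appropriate graded piece of a link. That link is a lower-dimensional complete projective fan, on which the form $(-1)^{\bullet}(\ell^{k-1}\cdot,\cdot)$ is definite. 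Summing over $\rho$ with positive coefficients $c_\rho$ forces $x|_\rho=0$ for all $\rho$. A separate argument then shows that an element of $IH(\Phi)$ vanishing on all ray-stars in this degree range is zero. That argument uses the minimal extension property, namely that $\overline{\mathcal{L}(\sigma)}\cong\overline{\mathcal{L}(\partial\sigma)}$, applied to the dual (compactly supported) module.

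The third step passes from hard Lefschetz to Hodge--Riemann, which is needed to close the induction. I would use a deformation argument: connect $\ell$ through strictly convex functions to one for which the signature is known, or use the fact that Hodge--Riemann for $\Phi$ follows from hard Lefschetz for $\Phi$ together with Hodge--Riemann for all links. Signatures are constant along a path on which the forms stay nondegenerate, by hard Lefschetz, so it is enough to check a single convenient $\ell$.

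The main obstacle is that, for non-simplicial fans, $\mathcal{L}_\Phi$ has no explicit pairing; Brion's formula is available only when $\mathcal{L}_\Phi=\mathcal{A}_\Phi$. The compatibility of the global pairing with restriction to stars of rays, which the positivity argument in the second step requires, must therefore come from the canonical Bressler--Lunts pairing \cite{BL2}. For that reason I would treat the existence and locality of this pairing as the key input, and cite it rather than reprove it.
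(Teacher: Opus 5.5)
The paper gives no proof of Theorem~\ref{th31}; it quotes Karu \cite[Theorem 0.1]{K}. Your outline is McMullen's argument for simple polytopes. It breaks exactly where Karu's theorem goes beyond McMullen: non-simplicial fans, i.e.\ normal fans of non-simple polytopes, which is the case the paper needs.

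The concrete failure is in your second step. Let $\Phi$ be the face fan of the cube, which is the normal fan of the octahedron.
\begin{itemize}
\item The functions $\chi_\rho$ do not exist. On the cone over the face $x_1=1$ one has $(1,1,1)=(1,1,-1)+(1,-1,1)-(1,-1,-1)$. So a conewise linear function vanishing on the other three rays of that cone also vanishes on $\rho=\R_{\ge0}(1,1,1)$. Hence $\ell=\sum_\rho c_\rho\chi_\rho$ is impossible for every $\ell$, generic or not.
\item Subdividing does not rescue this. For a simplicial subdivision $\hat\Phi$, $IH(\Phi)$ is at best a non-canonically split summand of $IH(\hat\Phi)$, which is a combinatorial decomposition theorem you would have to prove. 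Moreover, $\ell$ is not strictly convex on $\hat\Phi$ or on its links, so the inductive Hodge--Riemann relations do not apply to the terms of your sum.
\item Restriction to a ray star does not land in the link. The reduced stalk $\overline{\mathcal{L}_{\Phi,\sigma}}$ at a cone $\sigma\ni\rho$ is the local intersection cohomology of $\sigma$, governed by $\partial\sigma$, not that of $\sigma/\rho$. In the example, the link $\Phi_\rho$ is the fan of $\mathbb{P}^2$, with Betti numbers $1,1,1$. By contrast, $\overline{\mathcal{L}_\Phi(\mathrm{Star}\,\rho)}$ has dimensions $1,4,1$. The three extra classes come from the three square cones through $\rho$, whose local intersection cohomology is $1+q^2$ rather than $1$.
\end{itemize}
So the identity $(\ell x,y)=\sum_\rho c_\rho(x|_\rho,y|_\rho)_\rho$ has no meaning as stated. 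The obstacle is not merely the lack of an explicit pairing, as your last paragraph suggests. Your first paragraph's identification of link data with $\overline{\mathcal{L}_{\Phi,\sigma}}$-type data also conflates links with boundaries $\partial\sigma$.

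The remaining steps are placeholders too.
\begin{itemize}
\item The claim that a class of degree $<n$ vanishing on all ray stars is zero comes, in the simplicial case, from $H(\Phi)$ being a ring generated in degree $2$, together with Poincar\'e duality. $IH(\Phi)$ is not a ring, and the appeal to a compactly supported dual module is not an argument.
\item In your third step, constancy of signatures along a path of strictly convex functions only transports the Hodge--Riemann relations from some $\ell_0$ where they are already known, and you name none. McMullen obtains one by flipping down to a simplex, which changes the fan and is unavailable for a fixed non-simplicial fan.
\item Your alternative, deducing Hodge--Riemann for $\Phi$ from hard Lefschetz plus the links, cannot handle the middle degree. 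There the form is $(x,x)$, with no factor of $\ell$ to distribute over rays.
\end{itemize}
A genuinely new comparison is needed at these points, for instance with a projective simplicial subdivision via a decomposition theorem and a limiting argument $\ell+\epsilon m$, $\epsilon\to 0$. Supplying such an input is the substance of Karu's theorem, and your proposal does not.
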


Observe the following consequences of the hard Lefschetz theorem. First, the (intersection) Betti numbers of a fan $\Phi$, which we will denote \[ih^j_\Phi:=\dim IH^j(\Phi),\] form a $\mathbf{symmetric}$ sequence, so \[ih_\Phi^j=ih_\Phi^{2n-j}.\] Furthermore, since the composition of functions $\ell^{k}:IH^{n-k}(\Phi)\rightarrow IH^{n+k}(\Phi)$ is an isomorphism, the first iteration $\ell:IH^{n-j}(\Phi)\rightarrow IH^{n-j+2}(\Phi)$ is an injection, and the last map $\ell:IH^{n+j-2}(\Phi)\rightarrow IH^{n+j}(\Phi)$ is a surjection. Thus, the sequence of even index Betti numbers $ih_\Phi^{2j}$ is $\mathbf{unimodal}$. The kernel of the surjection $\ell^{j+1}:IH^{n-j}(\Phi)\rightarrow IH^{n+j+2}(\Phi)$ is called the $\mathbf{primitive\ subspace}$ of $IH^{n-j}(\Phi)$, which we denote \[\text{Prim}_\ell IH^{n-j}(\Phi)=\text{ker}(\ell^{j+1}|_{IH^{n-j}(\Phi)}).\] For the dimensions of the primitive subspaces, we write \[ip_\Phi^{n-j}:=\dim \text{Prim}_\ell IH^{n-j}(\Phi)=ih_\Phi^{n-j}-ih_\Phi^{n-j-2}.\] It is sometimes convenient to encode these invariants as the coefficients of a polynomial, so we define: $$ih_\Phi(q):=\sum_{j=0}^{2n} ih_\Phi^j q^j$$
$$ip_\Phi(q):=\sum_{j=0}^{n} ip_\Phi^j q^j.$$

The Hodge-Riemann relations for fans proven by Bressler-Lunts \cite{BL2} describe how the cohomology pairing, $(\cdot,\cdot)_\Phi:IH^{n-k}(\Phi)\times IH^{n+k}(\Phi)\rightarrow IH^{2n}(\Phi)$, behaves on each primitive summand.

\begin{thm} (Hodge-Riemann Relations) Let $\Phi$ and $\ell$ be as in Theorem \ref{th31}. Then for any non-zero $a\in \text{Prim}_\ell IH^{n-k}(\Phi)$ we have $$Q_\Phi(a):=(-1)^\frac{n-k}{2}(a,\ell^ka)_\Phi>0.$$
\end{thm}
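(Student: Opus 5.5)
This statement is the Bressler--Lunts Hodge--Riemann theorem; the paper cites it rather than proving it. If I had to prove it, I would follow the Karu / Bressler--Lunts strategy: an induction on dimension combined with a deformation argument in the Lefschetz class $\ell$.

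\emph{Induction and reduction to the pairing.} I would prove hard Lefschetz (Theorem \ref{th31}) and the Hodge--Riemann relations simultaneously, by induction on $n=\dim V$, for all projective fans. The induction needs one structural input: a canonical, non-degenerate pairing that is compatible with restriction to stars. Karu's pairing depended on choices, and Bressler--Lunts showed the canonical one agrees with it. With that in hand, the statement $Q_\Phi(a)>0$ is a statement about a family of quadratic forms. Concretely, write
\[ Q_\ell(a)=(-1)^{\frac{n-k}{2}}(a,\ell^k a)_\Phi \]
on the primitive subspace $\text{Prim}_\ell IH^{n-k}(\Phi)$, and regard $\ell$ as ranging over the open convex cone $K$ of strictly convex piecewise linear functions on $\Phi$.

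\emph{Step 1: HR on rays $\Rightarrow$ HL in dimension $n$.}
\begin{enumerate}
\item For each ray $\rho$, the star of $\rho$ projects to a complete projective fan $\Phi_\rho$ in $V/\mathbb{R}\rho$, of dimension $n-1$.
\item Restricting $\ell$ gives a strictly convex function $\ell_\rho$ on $\Phi_\rho$, so by induction $\ell_\rho$ satisfies HR on $IH(\Phi_\rho)$.
\item Choose $\ell$ with $\ell=\sum_\rho c_\rho\chi_\rho$, all $c_\rho>0$, where $\chi_\rho$ is the restriction-to-star class. I would get this by adding a global linear function, which does not change the action on $IH$.
\item Take $a\in IH^{n-k}(\Phi)$ with $\ell^k a=0$. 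Then $0=(a,\ell^k a)=\sum_\rho c_\rho (a|_\rho,\ell_\rho^{k-1}a|_\rho)_{\Phi_\rho}$.
\item Each $a|_\rho$ is primitive for $\ell_\rho$, since $\ell_\rho^{k}a|_\rho=0$. By HR on $\Phi_\rho$ every summand then has the same sign and vanishes only if $a|_\rho=0$.
\item Hence $a|_\rho=0$ for all $\rho$. An injectivity lemma for the restriction $IH(\Phi)\to\bigoplus_\rho IH(\Phi_\rho)$ in degrees below the middle then gives $a=0$.
\end{enumerate}
This injectivity lemma is proved from the minimal extension sheaf axioms via the pairing.

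\emph{Step 2: HL $\Rightarrow$ HR by deformation.}
\begin{enumerate}
\item By Step 1, hard Lefschetz holds for every $\ell\in K$, so $\dim\text{Prim}_\ell IH^{n-k}=ih^{n-k}-ih^{n-k-2}$ is constant on $K$.
\item Non-degeneracy of the pairing together with HL shows $Q_\ell$ is non-degenerate on the primitive subspace. The signature of $Q_\ell$ is therefore locally constant in $\ell$.
\item Since $K$ is convex, hence connected, the signature is constant on $K$, and it suffices to find a single $\ell$ where $Q_\ell$ is definite.
\item To find one, I would use the standard trick of passing to $\Phi\times$ (a ray), or a suitable degeneration, and read off the signature of the whole Lefschetz module from HR in dimension $n-1$ via the sum over rays above.
\end{enumerate}

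The main obstacle is this base point in Step 2: exhibiting one $\ell$ at which $Q_\ell$ is definite. I would first try to pin down the total signature $\sum_k(-1)^k\dim\text{Prim}$ combinatorially, and then compare it with the signature computed from the inductive ray decomposition.
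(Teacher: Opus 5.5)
The paper does not prove this statement; it quotes it from Bressler--Lunts, building on Karu. Your outline therefore has to carry the whole argument. Its architecture is the standard one: simultaneous induction, HR in dimension $n-1$ giving HL in dimension $n$, and constancy of the signature on $K$. But it stops before the essential step.

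\textbf{The missing base point.} Step 1 and items (1)--(3) of Step 2 only reduce the Hodge--Riemann relations to the claim that $Q_\ell$ is definite on $\text{Prim}_\ell IH^{n-k}(\Phi)$ for a single $\ell\in K$. That claim is the entire content of HR beyond hard Lefschetz, and you leave it open.

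It cannot be read off from the ray decomposition of Step 1. If $a$ is primitive, i.e.\ $\ell^{k+1}a=0$, then $a|_\rho$ only satisfies $\ell_\rho^{k+1}a|_\rho=0$. Primitivity in $IH^{n-k}(\Phi_\rho)=IH^{(n-1)-(k-1)}(\Phi_\rho)$ requires the stronger condition $\ell_\rho^{k}a|_\rho=0$. So $a|_\rho=p_\rho+\ell_\rho q_\rho$, with $p_\rho$ and $q_\rho$ primitive on $\Phi_\rho$ of degrees $n-k$ and $n-k-2$. By HR on $\Phi_\rho$, the contribution $(q_\rho,\ell_\rho^{k+1}q_\rho)$ to $\sum_\rho c_\rho(a|_\rho,\ell_\rho^{k-1}a|_\rho)_{\Phi_\rho}$ has the sign opposite to $(-1)^{\frac{n-k}{2}}$. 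For $k=0$ there is no factor of $\ell$ to split at all. Step 1 works only because there you assume $\ell^k a=0$.

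\textbf{The suggested remedies do not supply it.}
\begin{enumerate}
\item $\Phi$ times a ray is not complete. The product with the complete fan on a line has dimension $n+1$, and the stars of its two new rays project onto $\Phi$ itself. Running the ray argument there therefore presupposes HR for $\Phi$.
\item A combinatorial computation of the total signature is circular here, since the formula of Theorem \ref{th33} is deduced from HR.
\item Even granting that formula, it is vacuous for odd $n$. For even $n$ it fixes only one alternating sum over all the primitive pieces, so it cannot force $Q_\ell$ to be definite on each $\text{Prim}_\ell IH^{n-k}(\Phi)$ separately.
\end{enumerate}

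\textbf{Step 1 is the simplicial argument.} A decomposition $\ell=\sum_\rho c_\rho\chi_\rho$ into conewise linear Courant functions exists only when $\Phi$ is simplicial. In that case $\mathcal{L}_\Phi=\mathcal{A}_\Phi$ and HR is already McMullen's theorem. For general $\Phi$ you need, as an external input, an identity $(a,\ell b)_\Phi=\sum_\rho c_\rho(a|_\rho,b|_\rho)_{\Phi_\rho}$ for the canonical pairing; adding a global linear function does not produce it.

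Your injectivity lemma for $IH^{n-k}(\Phi)\to\bigoplus_\rho IH^{n-k}(\Phi_\rho)$ has the same issue. In the simplicial case it holds because the $\chi_\rho$ generate $\overline{\mathcal{A}(\Phi)}$ in positive degrees. But $\overline{\mathcal{L}(\Phi)}$ is in general not generated by $1$ over $\mathcal{A}(\Phi)$. Deriving the lemma ``via the pairing'' from the identity above only gives $a\perp \ell\, IH^{n+k-2}(\Phi)$. That forces $a=0$ only if $\ell\colon IH^{n+k-2}(\Phi)\to IH^{n+k}(\Phi)$ is surjective, which is the hard Lefschetz statement Step 1 is meant to prove.
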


Combining the above two theorems, we obtain the following result analogous to the proof of the classical Hodge index theorem \cite{H}. This gives a combinatorial proof of a similar formula obtained by Maxim-Sch\"urmann \cite[Example 5.8, Example 6.7]{MS} in the toric context (i.e., for rational polytopes). Note that in the notations of \cite{MS}, our result is stated for their polar polytope, see \cite[Formula (6.13)]{MS}, which explains the difference in powers of $(-2)$ in our formulae. 

\begin{thm} (Signature of a projective fan) \label{th33} Let $\Phi$ be the projective fan of dimension $n$ associated to a convex polytope. The signature of its intersection pairing is given by $$\sigma(\Phi)=\sum_{\Delta\prec\Phi} g_\Delta(-1)(-2)^{n-\dim\Delta-1}$$
where $g_\Delta(t)$ is Stanley's $g$-polynomial associated to the proper face $\Delta$.
\end{thm}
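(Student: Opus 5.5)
The plan is to compute the signature in two stages. First, Theorem \ref{th31} and the Hodge--Riemann relations reduce $\sigma(\Phi)$ to an alternating sum of the intersection Betti numbers, i.e.\ to $ih_\Phi$ evaluated at $q=\sqrt{-1}$. Second, the defining recursion for Stanley's $h$-polynomial turns this number into the stated sum over faces.

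\textbf{Step 1 (reduction to the middle degree).} Odd-degree intersection cohomology of a fan vanishes, since $\Gamma(\mathcal{L}_\Phi)$ is generated in even degrees. So $IH(\Phi)=\bigoplus_j IH^{2j}(\Phi)$, and the pairing matches $IH^{n-k}$ with $IH^{n+k}$. For $k\neq 0$ the block $IH^{n-k}\oplus IH^{n+k}$ is hyperbolic: non-degeneracy \cite{BBFK} gives each summand as a Lagrangian, so the block contributes signature $0$. Hence $\sigma(\Phi)$ is the signature of the form on $IH^n(\Phi)$, and in particular it is $0$ when $n$ is odd.

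\textbf{Step 2 (primitive decomposition for $n$ even).} Theorem \ref{th31} gives the Lefschetz decomposition
$$IH^n(\Phi)=\bigoplus_{r\ge0}\ell^r\,\text{Prim}_\ell IH^{n-2r}(\Phi).$$
The summands are orthogonal. For $a\in\text{Prim}^{n-2r}$ and $b\in\text{Prim}^{n-2s}$ with $r<s$, we have $(\ell^ra,\ell^sb)=(\ell^{r+s}a,b)$ up to moving $\ell$ across, and $\ell^{r+s}a=0$ because $r+s\ge 2r+1>\tfrac{n-(n-2r)}{2}$, using $\ell^{r+1}$-vanishing on $\text{Prim}^{n-2r}$. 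On a single summand, $(\ell^ra,\ell^rb)=(a,\ell^{2r}b)$. By the Hodge--Riemann relations (with $k=2r$) this form is definite of sign $(-1)^{(n-2r)/2}$. Therefore
$$\sigma(\Phi)=\sum_r(-1)^{n/2-r}\,ip_\Phi^{n-2r}=\sum_r(-1)^{n/2-r}\bigl(ih^{n-2r}_\Phi-ih^{n-2r-2}_\Phi\bigr).$$
Writing $h_j=ih^{2j}_\Phi$, this telescopes to $h_{n/2}+2\sum_{j<n/2}(-1)^{n/2-j}h_j$. By the symmetry $h_j=h_{n-j}$ this equals $\sum_{j=0}^n(-1)^jh_j=h(-1)$. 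When $n$ is odd, symmetry gives $h(-1)=0$ as well. So in all cases $\sigma(\Phi)=h(-1)$, where $h(t)=\sum_j ih^{2j}_\Phi t^j$.

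\textbf{Step 3 (combinatorics).} I will invoke the BBFK/Karu computation that the combinatorial intersection Betti numbers $ih^{2j}_\Phi$ are the coefficients of Stanley's toric $h$-polynomial. Setting $t=-1$ in the recursion $h(t)=\sum(t-1)^{d-(\mathrm{rk}+1)}g(t)$ then gives
$$\sigma(\Phi)=\sum_{\Delta\prec\Phi}g_\Delta(-1)(-2)^{\,n-\dim\Delta-1}.$$
The main obstacle I expect is this step: matching conventions precisely. One must identify which poset (the fan's cones versus the polytope's faces, and hence the polar polytope as noted before the statement) carries the $h$-polynomial equal to $ih_\Phi$. One must also check that the rank function in the recursion becomes $\dim\Delta$ with exponent $n-\dim\Delta-1$, including the contribution of the empty face/zero cone. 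I would first verify the formula for $n=2$ (a polygon, where $\sigma=2-h_1$) to fix the indexing, and then cite the fan version of the $h$--$g$ recursion from \cite{BBFK}.
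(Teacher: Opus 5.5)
Your proposal is correct and follows essentially the same route as the paper. The Lefschetz decomposition of $IH^n(\Phi)$ plus the Hodge--Riemann relations give $\sigma(\Phi)=\sum_j(-1)^j ih^{2j}_\Phi$; the paper then identifies $ih_\Phi$ with Stanley's toric $h$-polynomial, via the Bressler--Lunts $ih$/$ip$ recursion matching Stanley's $h$/$g$ recursion, and evaluates at $t=-1$ (your extra justification of the hyperbolic blocks and of orthogonality fills in points the paper only asserts).

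Two small slips need fixing:
\begin{itemize}
\item Primitivity on $IH^{n-2r}(\Phi)$ means $\ell^{2r+1}a=0$, not $\ell^{r+1}a=0$; the condition $\ell^{2r+1}a=0$ is exactly what your inequality $r+s\ge 2r+1$ uses.
\item The telescoped sum is $(-1)^{n/2}h_{n/2}+2\sum_{j<n/2}(-1)^j h_j$. Your expression is off by the factor $(-1)^{n/2}$, as your own check $\sigma=2-h_1$ for $n=2$ shows.
\end{itemize}
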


\begin{proof} We will begin by computing the signature $\sigma(\Phi):=\sigma(IH^n(\Phi))$ in terms of the Betti numbers of $\Phi$. Then we use the fact that the Betti numbers coincide with certain combinatorial invariants.

When $n$ is odd, the middle intersection cohomology group is $IH^n(\Phi)=0$ because all odd dimensional intersection cohomology vanishes, so $\sigma(\Phi)=0$. Observe that for odd $n$, hard Lefschetz (Theorem \ref{th31}) implies symmetry of the non-zero Betti numbers: $ih^{2k}=ih^{2n-2k}$. Thus, 
\begin{equation}\sum_{k=0}^n (-1)^kih_{\Phi}^{2k}=0=\sigma(\Phi).
\end{equation}

When $n$ is even, we obtain the following orthogonal decomposition of $IH^n(\Phi)$ by the hard Leschetz theorem:
\begin{equation}
   IH^n(\Phi)=\bigoplus_k \ell^{k}\text{Prim}_\ell IH^{n-2k}(\Phi), 
\end{equation}
so we have
\begin{equation}
\begin{split}
    \sigma(\Phi)&=\sum_{0\leq k\leq \frac{n}{2}}\sigma(\text{Prim}_\ell IH^{n-2k}(\Phi)) \\
   & \overset{(HR)}{=}\sum_{0\leq k\leq \frac{n}{2}}(-1)^\frac{n-2k}{2}\text{dim}(\text{Prim}_\ell IH^{n-2k}(\Phi)),\\
&=\sum_{0\leq k\leq \frac{n}{2}}(-1)^\frac{n-2k}{2}(ih_\Phi^{n-2k}-ih_\Phi^{n-2k-2}),
\end{split}
\end{equation}
where the sign of the terms in the second equality is given by the Hodge-Riemann relations. Furthermore, rearranging terms shows 

\begin{equation}\label{eq1}
    \sum_{0\leq k\leq \frac{n}{2}}(-1)^\frac{n-2k}{2}(ih_\Phi^{n-2k}-ih_\Phi^{n-2k-2})=\sum_{0\leq k\leq \frac{n}{2}}(-1)^\frac{n-2k}{2}ih_\Phi^{n-2k}+\sum_{0\leq k\leq \frac{n}{2}}(-1)^{\frac{n-2k}{2}+1}ih_\Phi^{n-2k-2}
\end{equation}
Then reindexing the second summand of the right side of Equation \ref{eq1}, we have that
\begin{equation*}
\begin{split}
    \sum_{0\leq k\leq \frac{n}{2}}(-1)^\frac{n-2k}{2}ih_\Phi^{n-2k}&+\sum_{0\leq k\leq \frac{n}{2}}(-1)^{\frac{n-2k}{2}+1}ih_\Phi^{n-2k-2}=2\left(\sum_{0\leq k< \frac{n}{2}}(-1)^\frac{n-2k}{2}ih_\Phi^{n-2k}\right)+(-1)^{\frac{n}{2}} ih_\Phi^n \\
&   \overset{(HL)}{=}\sum_{0\leq k< \frac{n}{2}}(-1)^\frac{n-2k}{2}ih_\Phi^{n-2k}+\sum_{0\leq k< \frac{n}{2}}(-1)^\frac{n+2k}{2}ih_\Phi^{n+2k}+(-1)^{\frac{n}{2}} ih_\Phi^n \\
&    =\sum_{0\leq r\leq n}(-1)^rih_\Phi^{2r},
\end{split}
\end{equation*}
where the equality labeled (HL) follows from the hard Lefschetz theorem, which implies the symmetry of Betti numbers.

From now on, we treat both the even and the odd cases together. Bressler-Lunts show that the Hard Lefschetz theorem implies that local $IH$ Betti numbers are well-defined \cite[Corollary 7.4]{BL} and that the following formulae \cite[Proposition 7.7]{BL} relate $ih_\Phi(q)$ to $ip_\Phi(q)$:
\[ip_o(q)=ih_o(q)\equiv1\]
\[ip^j_\sigma=\begin{cases}
                ih_\sigma^j-ih_\sigma^{j-2} \text{ for } 0\leq j\leq d  \\
                0 \text{ otherwise}
                 \end{cases}
\]
\[ih_\sigma(q)=\sum_{\Delta\in\sigma}(q^2-1)^{d-\text{dim}\Delta-1}ip_\Delta(q).\]
They then compare with Stanley's $h$- and $g$-polynomials whose relations coincide with those of the $ip$ and $ih$ polynomials of $\Phi$ when we declare the poset to be the poset of cones under inclusion and the poset's rank function to be the dimension of the cone minus 1. Thus, the pairs of polynomials are defined by the same inductive procedure with the same base case, so they must have the same coefficients, and $$h_\Phi(q^2)=ih_\Phi(q),\ \text{{and}}$$
$$g_\Phi(q^2)=ip_\Phi(q).$$
Then the signature $\sigma(\Phi)$ can be rewritten as
$$\sigma(\Phi)=\sum_i(-1)^iih_\Phi^{2i}=\sum_i(-1)^ih_\Phi^{i}=\sum_{\Delta\in\Phi} g_\Delta(-1)(-2)^{d-\text{dim}\Delta-1},$$
which completes the proof.
\end{proof}

\section{Towards a combinatorial Hodge index theorem for fans}

In the case of projective toric varieties, the intersection cohomology is of Hodge-Tate type, meaning that the Hodge structure of $IH^*(X_\Phi)$ is concentrated in bidegrees $(p,p)$, e.g., see \cite{Sa, MS}. In the absence of geometry, we define combinatorial intersection cohomology of fans to have the same property: 
\[
IH^{p,q}(\Phi):= 
\begin{cases} 
IH^{p+q}(\Phi), & p=q \\ 
0, & otherwise.
\end{cases}
\] 
Denote the $p$-th graded piece of $IH^j(\Phi)$ by \[Gr_F^p(IH^j(\Phi)):=IH^{p,j-p}(\Phi).\] Then we arrive at the following reinterpretation of the Poincar\'e polynomial of a fan in terms of the intersection cohomology Hodge $\chi_y$-polynomial:
\begin{equation*}
    \begin{split}
    \chi^{IH}_y(\Phi)&:=\sum_{p\leq j}(-1)^j\text{dim Gr}^p(IH^j(\Phi))(-y)^p \\
    & =\sum_{p=1}^{\dim \Phi}\text{dim Gr}^p(IH^{2p}(\Phi))(-y)^p \\
    & =\sum_{p=1}^{\dim \Phi}ih_\Phi^{2p}\cdot(-y)^p \\
    &=ih_\phi(t) \\
    &=h_\Phi(t^2).
    \end{split}
\end{equation*}
with $t^2=-y$.

Restricting to $y=1,$ we get the combinatorial analog of the Hodge index theorem: 
\begin{cor}
   With the above notations, we get for a projective fan $\Phi$ the following identity:
   $$\sigma(\Phi)=\chi^{IH}_1(\Phi).$$
\end{cor}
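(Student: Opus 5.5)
The identity follows by evaluating both sides at $y=1$ and comparing with the Betti-number form of the signature obtained in the proof of Theorem \ref{th33}. On the left I will use, without recomputing it, the intermediate identity established there for both parities of $n$:
$$\sigma(\Phi)=\sum_{0\le r\le n}(-1)^r ih_\Phi^{2r}.$$
For $n$ odd this came from the vanishing of $IH^n(\Phi)$ together with the symmetry $ih^{2k}=ih^{2n-2k}$. For $n$ even it came from the primitive decomposition (hard Lefschetz) and the sign of $Q_\Phi$ on each primitive summand (Hodge--Riemann relations).

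On the right I will expand the definition of $\chi^{IH}_y(\Phi)$ directly. Since the Hodge structure is declared to be of Tate type, $\mathrm{Gr}_F^p(IH^j(\Phi))$ vanishes unless $j=2p$, in which case it is all of $IH^{2p}(\Phi)$. All odd-degree $IH$ vanishes, so $(-1)^j=1$ on every surviving term. This gives
$$\chi^{IH}_y(\Phi)=\sum_{p=0}^{n} ih_\Phi^{2p}(-y)^p.$$
Note that the sum starts at $p=0$, so it includes $ih^0_\Phi=1$. The displayed computation preceding the corollary starts its sum at $p=1$; I would correct that lower limit, since otherwise the constant term $1$ would be lost and the identity would fail. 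Substituting $y=1$ gives $\chi^{IH}_1(\Phi)=\sum_p(-1)^p ih_\Phi^{2p}$, which matches the formula for $\sigma(\Phi)$ above term by term.

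Equivalently, $\chi^{IH}_1(\Phi)=h_\Phi(t^2)$ with $t^2=-1$, that is, $h_\Phi(-1)$. Using $h_\Phi(q^2)=ih_\Phi(q)$ and the recursion for $h$ in terms of $g$, this recovers the $g$-polynomial expression of Theorem \ref{th33}, which serves as a consistency check.

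The only genuine content is the signature computation, and that is already done in Theorem \ref{th33}. The main point of care is bookkeeping: the range of $p$ (it must include $p=0$) and the convention $t^2=-y$, so that the substitution $y=1$ produces the alternating sum with the correct signs. I also need $\Phi$ to be the normal fan of a polytope, so that Theorem \ref{th33} and the Hodge--Riemann relations apply, and that is part of the hypothesis "projective fan".
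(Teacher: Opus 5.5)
Your proposal is correct and follows the paper's route: set $y=1$ in the expansion $\chi^{IH}_y(\Phi)=\sum_p ih_\Phi^{2p}(-y)^p=h_\Phi(t^2)$ and compare it with the identity $\sigma(\Phi)=\sum_{r}(-1)^r ih_\Phi^{2r}$ obtained in the proof of Theorem \ref{th33}. Your correction of the lower summation limit to $p=0$ is also right, because the paper's next equality with $ih_\Phi(t)$ already includes the constant term $ih_\Phi^0=1$, which the corollary needs.
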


Finally, observe that when we return to the geometric situation, this is consistent with the Hodge index theorem 
$$\sigma(X)=\chi^{IH}_1(X)$$
for a (possibly singular) projective toric variety (proved for arbitrary projective varieties in \cite{MSS} and for compact varieties in \cite{BPS}).

\end{document}